\renewenvironment{proof}{{\bfseries Proof.}}{\qed}
\numberwithin{equation}{section} 
\theoremstyle{plain}
\newtheorem{theorem}{Theorem}[section] 
\newtheorem{pro}[theorem]{Proposition} 
\newtheorem{cor}[theorem]{Corollary} 
\newtheorem{lemma}[theorem]{Lemma} 
\theoremstyle{definition}
\newtheorem{defn}[theorem]{Definition} 
\newtheorem{remark}[theorem]{Remark} 
\newtheorem{Example}[theorem]{Example}
\newcommand{\C}{\mathbb{C}}
\newcommand{\thmref}[1]{Theorem~\ref{#1}}
\newcommand{\lemref}[1]{Lemma~\ref{#1}}
\numberwithin{equation}{section} 
\def \SL{ {\rm SL}}
\begin{document}
	\title[$c$-Reversibility in $\SL(n, \C)$] {Conjugate Reversibility in  Complex Special Linear Groups}
	\author[K. Gongopadhyay and R. Mondal] { Krishnendu Gongopadhyay \and Rahul Mondal}

\address{Indian Institute of Science Education and Research (IISER) Mohali,
		Knowledge City,  Sector 81, S.A.S. Nagar 140306, Punjab, India}
	\email{krishnendug@gmail.com, krishnendu@iisermohali.ac.in}

	\address{Indian Institute of Science Education and Research (IISER) Mohali,
		Knowledge City,  Sector 81, S.A.S. Nagar 140306, Punjab, India}
	\email{canvas.rahul@gmail.com}

\makeatletter
\@namedef{subjclassname@2020}{\textup{2020} Mathematics Subject Classification}
\makeatother

\subjclass[2020]{Primary 15A86; Secondary 20E45, 15A23, 15A21}

	\keywords{reversibility, special likear group, complex projective space, classification. }
    
    \date{\today}

      \begin{abstract}
        We introduce and study conjugate reversibility (or $c$-reversibility) in the complex special linear group $\SL(n,\C)$  where an element is conjugate to the inverse of its complex conjugate. We prove that in $\SL(n, \C)$, every $c$-reversible element is strongly $c$-reversible. We provide a complete classification of $c$-reversible elements based on their conjugacy invariants.  This leads to an algebraic characterization of projective transformations. As a special case, a finer classification in $\SL(4, \C)$ is obtained in terms of trace conditions and resultant computations. 
    \end{abstract}
  	\maketitle

	\section{Introduction}\label{sec:intro}
    	Among the many ways to express symmetry, one particularly intriguing notion is that of reversibility, the ability of a transformation to be conjugated to its own inverse. Reversibility appears in many areas across group theory, geometry, and dynamical systems, revealing deep patterns in how structures behave under time-reversal. For works related to reversibility, see, for example, \cite{BR}, \cite{dhs},  \cite{Dj}, \cite{El}, \cite{GM},  \cite{OS}, \cite{sho}, \cite{st}.
    
    In this article, we explore a new perspective on reversibility within the framework of the special complex linear groups, $\mathrm{SL}(n, \mathbb{C})$, the group of $n \times n$ complex matrices with determinant one. While the classification of reversible elements in $\mathrm{SL}(n, \mathbb{C})$ can be derived from the corresponding classification in the general linear group, it is noteworthy that the classification of products of two involutions, or equivalently, of  strongly reversible elements in $\mathrm{SL}(n, \mathbb{C})$, has been obtained only recently in  \cite{GLM2}. 
    
    In this work, we introduce and study the concept of \emph{conjugate reversibility}, or $c$-reversibility, where an element is conjugated to the inverse of its complex conjugate. This setting naturally intertwines group symmetries with complex conjugate structures and leads to new insights into the classification of complex projective transformations.

\begin{defn}
An element $g$ in $\SL(n, \C)$ is called \emph{conjugate reversible} or \emph{c-reversible} if there exists an element $h$ in $\SL(n, \C)$ such that $hgh^{-1}$ = $\overline{g}^{-1}$. 
 If further $h \overline h$ = $1$, then $g$ is called \emph{strongly $c$-reversible} and $h$ is called an \emph{involutory $c$-symmetry}.
	\end{defn}
This notion is a special case of the twisted reversibility introduced in \cite{gms}, \cite{dc}. To understand and classify $c$-reversible elements, we prove a characterization of $c$-reversibility based on Jordan canonical forms. We find that c-reversibility is closely linked to the way eigenvalues pair with their conjugate reciprocals. Further we prove the following theorem.
\begin{theorem}\label{2.8}
		$A \in \SL(n,\mathbb{C})$ is $c$-reversible if and only if it is strongly $c$-reversible.
	\end{theorem}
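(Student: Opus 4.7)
The forward direction (\emph{strongly $c$-reversible implies $c$-reversible}) is immediate from the definition. For the converse, the plan is to reduce to a Jordan canonical form and then construct an involutory $c$-symmetry block by block. Strong $c$-reversibility is preserved under $\SL(n,\C)$-conjugation: whenever $A = gBg^{-1}$ and $h_0 B h_0^{-1} = \overline{B}^{-1}$ with $h_0\overline{h_0} = I$, the element $h := \overline{g}\,h_0\,g^{-1}$ satisfies $hAh^{-1} = \overline{A}^{-1}$ and $h\overline{h} = I$. Hence I may assume $A$ is in Jordan canonical form.

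Invoking the Jordan-form characterization of $c$-reversibility announced just before the statement, the block data of such an $A$ is symmetric under the pairing $\lambda \leftrightarrow \overline{\lambda}^{-1}$. After reordering, $A$ is a block-diagonal sum of two kinds of building blocks: \emph{paired blocks} $J_k(\lambda)\oplus J_k(\overline{\lambda}^{-1})$ with $|\lambda|\neq 1$, and \emph{unit-modulus blocks} $J_k(\lambda)$ with $|\lambda|=1$. It suffices to construct an involutory $c$-symmetry on each summand, since the block-diagonal assembly preserves the identity $h\overline{h} = I$ and a unit-modulus rescaling of one block can enforce $\det h = 1$ (the condition $h_i\overline{h_i}=I$ already forces $|\det h_i|=1$). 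For a paired block I would try the block-anti-diagonal ansatz $h = \begin{pmatrix} 0 & X \\ \overline{X}^{-1} & 0 \end{pmatrix}$ with $X$ chosen so that $XJ_k(\overline{\lambda}^{-1})X^{-1} = \overline{J_k(\lambda)}^{-1}$. Then $h\overline{h} = I$ is immediate, and conjugating the defining equation of $X$ supplies the conjugation identity on the other diagonal block automatically.

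The main obstacle is the unit-modulus case $A = J_k(\lambda)$, $|\lambda|=1$. Any $c$-symmetry $h_0$ (which exists by $c$-reversibility) has a defect $c := \overline{h_0}\,h_0$ lying in the centraliser $Z(A)$, which for a single Jordan block is the commutative algebra of upper-triangular Toeplitz matrices in $A$. Replacing $h_0$ by $h_0 k$ with $k\in Z(A)$ preserves the conjugation identity, and the requirement $h\overline{h} = I$ translates into the single norm-type equation $k\,\phi(k) = c^{-1}$ in the abelian group of units of $Z(A)$, where $\phi(k) := h_0^{-1}\overline{k}\,h_0$. A direct check shows that $\phi$ is an involution on this group that fixes $c$, so one is confronted with a Hilbert 90-type norm equation. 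Because the group of units of $Z(A)$ is isomorphic to $\C^{\times} \times \C^{k-1}$ (scalars times unipotent Toeplitz), the equation can be unwound diagonal by diagonal: one first solves the scalar level (a $|a|^2 = $ const equation coming from $\phi|_{\mathrm{scalars}} = $ complex conjugation) and then propagates down the super-diagonals by a triangular recursion on the Toeplitz coefficients, at each stage reducing to a linear equation in a $\C^\times$-action that is solvable because the right-hand side lies in the $\phi$-fixed subspace. This explicit unwinding is the technical heart of the argument; the paired case and the final block-diagonal assembly are essentially formal once the ansatz and the reduction to Jordan form are in place.
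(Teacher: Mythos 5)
Your reduction to Jordan form, your treatment of the paired blocks $J_k(\lambda)\oplus J_k(\overline{\lambda}^{-1})$ via the block-anti-diagonal ansatz, and the unit-modulus rescaling to force $\det h=1$ all match the paper's argument and are correct. Where you diverge is the unit-modulus block $A=J(\lambda,k)$ with $|\lambda|=1$: the paper writes down an explicit upper-triangular involutory $c$-symmetry $B$ (entries built from binomial coefficients, verified by two combinatorial identities), whereas you propose a Hilbert-90 argument in $Z(A)^{\times}$. Your setup there is sound: the defect $c=\overline{h_0}h_0$ lies in $Z(A)$, replacing $h_0$ by $h_0k$ turns $h\overline h=I$ into the norm equation $k\,\phi(k)=c^{-1}$ with $\phi(k)=h_0^{-1}\overline{k}h_0$ an involutive automorphism of the abelian group $Z(A)^{\times}$ fixing $c$, and on each unipotent graded piece of the filtration by powers of the nilpotent part $\phi$ induces a map $z\mapsto u\overline z$ with $|u|=1$, for which every $\phi$-fixed class is indeed a norm.

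The gap is at the scalar level, which is exactly where the content of the theorem sits. Writing $c=c_0I+(\text{nilpotent})$, the scalar part of your norm equation is $|a_0|^2=c_0^{-1}$, which is solvable if and only if $c_0>0$. One gets $c_0\in\R$ for free (since $\overline c=h_0ch_0^{-1}$ is conjugate to $c$) and $c_0^{k}=|\det h_0|^2>0$, but for $k$ even this does not exclude $c_0<0$; moreover the sign of $c_0$ is an invariant of the whole coset of reversers (the defect of $h_0k$ has scalar part $c_0|a_0|^2$), so a bad sign cannot be removed by a different choice of $h_0$. A class in $\widehat H^0(\mathbb{Z}/2,\C^{\times})\cong\R^{\times}/\R_{>0}$ need not vanish in general — this is precisely the obstruction that makes ``reversible implies strongly reversible'' fail in other settings — so ``solve $|a|^2=\mathrm{const}$'' is not automatic and must be justified. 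It can be: since $|\lambda|=1$, the reverser $h_0$ carries the eigenline $\ker(A-\lambda I)=\C e_1$ to $\ker(\overline A^{-1}-\lambda I)=\C e_1$, say $h_0e_1=\alpha e_1$, whence $ce_1=\overline{h_0}h_0e_1=|\alpha|^2e_1$ and $c_0=|\alpha|^2>0$. With that supplement your route closes and gives a less computational alternative to the paper's explicit matrix $B$; without it, the argument does not establish the theorem.
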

    The group $\mathrm{SL}(n, \mathbb{C})$ acts on the complex projective space $\mathbb{C}\mathrm{P}^n$ by projective automorphisms. The projective special linear group $\mathrm{PSL}(n, \mathbb{C})$,  the quotient of $\mathrm{SL}(n, \mathbb{C})$ by its center, can be naturally identified with the full group of holomorphic projective automorphisms of $\mathbb{C}\mathrm{P}^n$.  Discrete groups of projective automorphisms arise naturally as monodromy groups of differential equations and play a central role in the study of geometric structures. The dynamics of these automorphisms are of particular interest, as they generalize the classical dynamics of projective transformations of the Riemann sphere.  In recent years, there has been growing interest in the dynamical properties of projective automorphisms, particularly in connection with the study of Kulkarni limit sets associated with complex projective automorphisms. This topic has been explored extensively in the work of Seade, Verjovsky, and others; see \cite{cns} for a comprehensive survey.
    
A key step in understanding the dynamics of individual transformations is to obtain an algebraic characterization that reflects their dynamical behavior. In the classical case of $\mathrm{SL}(2, \mathbb{C})$, elements are divided into three mutually exclusive classes:  elliptic, parabolic, and hyperbolic, based on their action on the Riemann sphere, with the absolute value of the trace providing a complete algebraic invariant distinguishing these classes.

In higher dimensions, one can still divide the automorphisms into the four classes as follows. 

\begin{defn}	\label{def:classification}
		Let $g$ be an element in $\SL(n+1, \C)$ that is acting as a projective automorphism. 
		\begin{enumerate}

			\item $g$ is called \emph{elliptic} if it is semisimple and all of its eigenvalues lie on the unit circle, that is, they have modulus one.

            \item An element \( g \) is called \emph{parabolic} if it is not semisimple, and all of its eigenvalues have modulus one.

\item An element \( g \) is called \emph{loxodromic} if it is semisimple and has at least one eigenvalue whose modulus is not equal to one.

\item An element \( g \) is called \emph{loxoparabolic} if it is not semisimple and has at least one eigenvalue whose modulus is not equal to one.
\end{enumerate}
        \end{defn}

        A matrix $g$ in $\SL(n, \C)$ is called \emph{$k$-loxodromic} if it has $k$ pairs of eigenvalues $re^{i \theta_j}$, $r^{-1}e^{ \theta_j}$ with $r_j \neq 1$ for $j=1, \ldots, k$, and all other eigenvalues are unit modulus complex numbers. We adopt the
convention of taking $k\geq 0$ with the understanding that a $0$-loxodromic means that
all eigenvalues are unit modulus complex numbers. An element $g$ in $\SL(n, \C)$ is \emph{regular} if it has no repeated roots. It follows that a regular $k$-loxodromic is a semisimple loxodrimic element. 

A polynomial $f(x)$ is called $c$-reciprocal when for every root  $\lambda$, it has a root $\overline \lambda^{-1}$ with the same multiplicity.

\medskip 
It is desirable to have an algebraic characterization of these classes using conjugacy invariants.  However, the situation becomes considerably more intricate with larger $n$. For the special subgroup ${\rm SU}(n,1) \subset \mathrm{SL}(n, \mathbb{C})$, whose elements act as isometries of complex hyperbolic space, algebraic characterizations of dynamical types have been established, see \cite{wmg}, \cite{go1}, and \cite{gpp}. Nonetheless, for the full group $\mathrm{SL}(n, \mathbb{C})$, obtaining such characterizations remains a challenging.   Navarette \cite{n} extended Goldman's classification in ${\rm SU}(2,1)$ to $\SL(3, \C)$. In \cite{clu}, Cano et al. attempted to develop such a classification in arbitrary dimensions. However, their approach led to a classification that is geometric and dynamic in nature rather than purely algebraic. Their method relies on an analysis of equicontinuity domains and Kulkarni limit sets associated with cyclic subgroups. Through this perspective, Cano et al. provided a geometric characterization of the automorphisms.

As an application of our classification of $c$-reversibility, we offer an algebraic characterization for the projective automorphisms. Note that an element $g$ in ${\rm PSL}(n, \C)$ is $c$-reversible if one of the lifts $\tilde g$ in $\SL(n, \C)$ is $c$-reversible. It is easy to see that if $g$ is $c$-reversible, then there is always a lift $\tilde g$ that is $c$-reversible. So, to understand $c$-reversibility in ${\rm PSL}(n, \C)$, it is enough to consider $c$-reversibility in $\SL(n, \C)$. Now, the automorphisms may be divided into two classes: $c$-reversible and those which are not $c$-reversible. These classes can be distinguished from each other purely in terms of their conjugacy invariants by the following theorem. 

\begin{theorem}\label{CM} 
An element $A \in \SL(n, \C)$ is $c$-reversible if and only if the minimal and characteristic polynomials of $A$ are  $c$-reciprocal. 
\end{theorem}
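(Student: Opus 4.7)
The plan is to handle the two implications separately. The forward direction is a short calculation with conjugacy invariants, while the converse is the substantive step and requires a case-by-case construction from the Jordan canonical form.

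For the forward direction, suppose $hAh^{-1} = \overline{A}^{-1}$. Since the characteristic and minimal polynomials are conjugacy invariants, $\chi_A = \chi_{\overline{A}^{-1}}$ and $m_A = m_{\overline{A}^{-1}}$. Writing $\chi_A(x) = \prod_i (x - \lambda_i)^{k_i}$, the eigenvalues of $\overline{A}^{-1}$ are exactly the $\overline{\lambda_i}^{-1}$ with the same algebraic multiplicities $k_i$, so the identity $\chi_A(x) = \prod_i (x - \overline{\lambda_i}^{-1})^{k_i}$ is precisely the $c$-reciprocal condition. The same reasoning, together with the fact that the largest Jordan block size at an eigenvalue is preserved under complex conjugation and inversion, establishes $c$-reciprocity of $m_A$.

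For the converse, I would use the similarity $\overline{J_k(\lambda)^{-1}} \sim J_k(\overline{\lambda}^{-1})$, which shows that the Jordan type of $\overline{A}^{-1}$ is obtained from that of $A$ by replacing each block $J_k(\lambda)$ with $J_k(\overline{\lambda}^{-1})$. Hence $A$ is $c$-reversible if and only if, for every eigenvalue $\lambda$ of $A$, the partition of Jordan block sizes at $\lambda$ agrees with the partition at $\overline{\lambda}^{-1}$. The $c$-reciprocity of $\chi_A$ supplies the equality of total algebraic multiplicities at $\lambda$ and $\overline{\lambda}^{-1}$, while the $c$-reciprocity of $m_A$ supplies the equality of the largest block sizes. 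Coupled with the Jordan-form characterization of $c$-reversibility established earlier in the paper, these two numerical matchings can be promoted to equality of the full partitions. Once the partitions are matched, I would assemble $h$ block-paired family by block-paired family: for each pair $\{\lambda, \overline{\lambda}^{-1}\}$ with $\lambda \neq \overline{\lambda}^{-1}$ one uses a matrix exchanging the paired families of Jordan blocks, and for each self-paired eigenvalue $\lambda = \overline{\lambda}^{-1}$ an intertwiner acting within the corresponding generalised eigenspace. Rescaling by an $n$-th root of $\det h$ ensures $h \in \SL(n,\C)$, and by \thmref{2.8} we may in fact arrange $h$ to be an involutory $c$-symmetry.

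The main obstacle is the step that upgrades the two numerical invariances, total algebraic multiplicity and largest Jordan block size at each eigenvalue, to equality of the complete partitions of block sizes at $\lambda$ and $\overline{\lambda}^{-1}$. This is where the prior Jordan-form characterization of $c$-reversibility is indispensable; once that partition matching is secured, the construction of the conjugator and the passage to $\SL(n,\C)$ are routine.
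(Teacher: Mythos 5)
Your forward direction is correct and is essentially the paper's argument: conjugacy invariance of $\chi_A$ and $m_A$, together with the fact that the elementary divisors of $\overline{A}^{-1}$ are obtained from those of $A$ by $\lambda \mapsto \overline{\lambda}^{-1}$.

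The converse, however, has a genuine gap, and it sits exactly where you yourself locate ``the main obstacle.'' The $c$-reciprocity of $\chi_A$ matches the total algebraic multiplicities at $\lambda$ and $\overline{\lambda}^{-1}$, and the $c$-reciprocity of $m_A$ matches the largest Jordan block sizes there; but two partitions with the same sum and the same largest part need not coincide, so these two numerical matchings do not determine the full partition of block sizes. Your proposed fix --- that these matchings ``can be promoted to equality of the full partitions'' by coupling them with the Jordan-form characterization --- is circular: Proposition~\ref{JB} characterizes $c$-reversibility \emph{in terms of} the full partition matching, so it cannot be used to derive that matching from the polynomial data. Concretely, in $\SL(10,\C)$ take $A = J(r,3)\oplus J(r,1)\oplus J(r,1)\oplus J(r^{-1},3)\oplus J(r^{-1},2)$ with $r$ real, $r\neq 0,\pm 1$; then $\chi_A=(x-r)^5(x-r^{-1})^5$ and $m_A=(x-r)^3(x-r^{-1})^3$ are both $c$-reciprocal, yet the block partitions $(3,1,1)$ at $r$ and $(3,2)$ at $r^{-1}$ differ, so by Proposition~\ref{JB} this $A$ is not $c$-reversible. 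Hence the step you flag cannot be filled in as written; the hypothesis one actually needs for the converse is that the elementary divisors themselves pair under $\lambda\mapsto\overline{\lambda}^{-1}$, not merely the characteristic and minimal polynomials. (For comparison, the paper's own proof of the converse makes the same unjustified leap, asserting that $c$-reciprocity of the two polynomials forces the set of elementary divisors to be closed under this map.) The remaining parts of your converse --- assembling the conjugator block-pair by block-pair, rescaling into $\SL(n,\C)$, and invoking \thmref{2.8} --- are routine once the partition matching is actually available.
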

An immediate consequence of this theorem is that the only elements that are not $c$-reciprocal fall within the loxoparabolic class. Moreover, 
$c$-reciprocal elements can be algebraically characterized by their characteristic polynomial, as given below. We use the following terminology to state this characterization.  
\begin{theorem}\label{RT}
Let $A \in \SL(n, \C)$ such that $A$ is $c$-reciprocal. Suppose the characteristic polynomial of \( A \) is 
\[
x^n - c_{n-1}x^{n-1} + c_{n-2}x^{n-2} - \dots + (-1)^n = 0.
\]

 Let \( R(\chi_A, \chi_A') \) denote the resultant of the characteristic polynomial \( \chi_A \) and its derivative \( \chi_A' \).  Then:
\begin{itemize}
    \item[(i)] \( A \) is \emph{regular \( 2m \)-loxodromic} if and only if
    \[
    R(\chi_A, \chi_A') > 0.
    \]

    \item[(ii)] \( A \) is \emph{regular \( (2m + 1) \)-loxodromic} if and only if
    \[
    R(\chi_A, \chi_A') < 0.
    \]

    \item[(iii)] \( A \) is \emph{not regular} (i.e., has a repeated eigenvalue) if and only if
    \[
    R(\chi_A, \chi_A') = 0.
    \]
\end{itemize}
 \end{theorem}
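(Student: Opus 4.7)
The plan is to analyze $R(\chi_A,\chi_A')$ via its standard product expression
\[
R(\chi_A,\chi_A')=\prod_{i\neq j}(\lambda_i-\lambda_j)=(-1)^{\binom{n}{2}}\mathrm{disc}(\chi_A),
\]
where $\lambda_1,\ldots,\lambda_n$ are the eigenvalues of $A$. Part~(iii) is then immediate from the standard vanishing criterion for the resultant: $R(\chi_A,\chi_A')=0$ iff $\chi_A$ and $\chi_A'$ share a root, iff $\chi_A$ has a repeated root, iff $A$ is not regular.

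For (i) and (ii) assume $A$ is regular. The $c$-reciprocity of $\chi_A$ partitions the spectrum into unit-modulus roots $\mu_1,\ldots,\mu_\ell$ (each satisfying $\bar\mu_i=\mu_i^{-1}$) and loxodromic pairs $\{\alpha_j,\beta_j\}$ with $\beta_j=\bar\alpha_j^{-1}$ and $|\alpha_j|\neq 1$ for $j=1,\ldots,k$, where $\ell+2k=n$. Write $\alpha_j=r_j s_j$, $\beta_j=r_j^{-1}s_j$ with $r_j>0$, $r_j\neq 1$, $|s_j|=1$, and set $c:=\prod_i\mu_i\in S^1$. The condition $\det A=1$ then forces $\prod_j s_j^2=\bar c$, an identity crucial for the phase bookkeeping below. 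I split
\[
R=R_U\cdot R_L\cdot M,
\]
where $R_U=\prod_{a\neq b}(\mu_a-\mu_b)$, $R_L=\prod_{\gamma\neq\delta\in L}(\gamma-\delta)$, and the mixed contribution $M=\prod_{u\in U,\,\gamma\in L}(u-\gamma)^2$ arises from pairing the ordered indices $(u,\gamma)$ with $(\gamma,u)$.

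Each factor will be expressed as a unit-modulus phase in $c,\bar c$ times a positive real. For $R_U$, the identity $(\mu_a-\mu_b)^2=-4\mu_a\mu_b\sin^2\!\bigl((\arg\mu_a-\arg\mu_b)/2\bigr)$ and $\prod_a\mu_a=c$ give $R_U=c^{\ell-1}\,p_1$ with $p_1>0$. For $M$, setting $Q(x)=\prod_j(x-\alpha_j)(x-\beta_j)$ and verifying $\overline{Q(u)}=c\,u^{-2k}Q(u)$ for $|u|=1$ yields $Q(u)^2=\bar c\,u^{2k}|Q(u)|^2$, whence $M=\bar c^{\,\ell}c^{2k}\,p_2$ with $p_2>0$. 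For $R_L$, same-pair factors contribute $-(\alpha_j-\beta_j)^2=-(r_j-r_j^{-1})^2 s_j^2$, while the cross-pair factor $(\alpha_j-\alpha_{j'})(\alpha_j-\beta_{j'})(\beta_j-\alpha_{j'})(\beta_j-\beta_{j'})$ reduces, under the substitution $w=s_j/s_{j'}$, to $s_j^2 s_{j'}^2$ times the real quantity $\bigl(w+w^{-1}-(r_j/r_{j'}+r_{j'}/r_j)\bigr)\bigl(w+w^{-1}-(r_j r_{j'}+(r_j r_{j'})^{-1})\bigr)$. Tracking the phases via $\prod_j s_j^2=\bar c$ yields $R_L=(-1)^k\bar c^{\,2k-1}\,p_3$ with $p_3>0$.

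Combining the three pieces,
\[
R=(-1)^k\,c^{\,\ell-1+2k}\,\bar c^{\,2k-1+\ell}\,p_1 p_2 p_3=(-1)^k\,|c|^{2(n-1)}\,p_1 p_2 p_3=(-1)^k\,p,
\]
with $p>0$, which proves (i) and (ii). The main obstacle is the cross-pair algebraic identity used in the computation of $R_L$; once this is in hand, everything else is routine bookkeeping, and the $\det A=1$ hypothesis is exactly what forces the surviving overall phase $|c|^{2(n-1)}$ to collapse to $1$.
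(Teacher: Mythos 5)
Your proposal is correct and follows essentially the same route as the paper: both expand $R(\chi_A,\chi_A')$ as the product of eigenvalue differences, use the $c$-reciprocal pairing to split the spectrum into unit-modulus eigenvalues and loxodromic pairs, show each block of factors is a unimodular phase times a positive real, invoke $\det A=1$ to collapse the accumulated phase, and land on the sign $(-1)^k$. The only difference is cosmetic: you organize the phase bookkeeping algebraically via $c=\prod_i\mu_i$, the functional relation $\overline{Q(u)}=c\,u^{-2k}Q(u)$, and the $w+w^{-1}$ substitution, where the paper carries out the same cancellations with explicit $\cosh$/$\cos$ identities.
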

We must note that the terminology and the proof of this theorem essentially follow from the ideas of \cite[Theorem 3.1]{gpp}. However, we shall give the details for completeness.

 \medskip 
 
In the special case of $n=4$, that is for $\SL(4, \C)$,  we further provide a characterization to characterize the $c$-reversibility of an element $A$ in terms of the coefficients of the characteristic polynomial. 

\begin{theorem}\label{CT}
Let \( A \in \mathrm{SL}(4, \mathbb{C}) \) have characteristic polynomial
\[
\chi_A(x) = x^4 - c_3 x^3 + c_2 x^2 - c_1 x + 1.
\]
If at least one of the following conditions holds, then \( A \) is not \( c \)-reciprocal:
\begin{enumerate}
    \item[(i)] \( c_3 \ne \overline{c_1} \),
    \item[(ii)] \( c_2 \notin \mathbb{R} \).
\end{enumerate}

\medskip

Suppose \( c_3 = \overline{c_1} \) and \( c_2 \in \mathbb{R} \). Then the following results hold:
\begin{enumerate}
    \item If the trace sequence \( \{\operatorname{tr}(A^n)\}_{n \in \mathbb{N}} \) is bounded, then \( A \) is \( c \)-reversible.
    
    \item If the trace sequence is unbounded and the minimal polynomial of \( A \) has degree different from 3, then \( A \) is still \( c \)-reversible.
    
    \item
    \begin{enumerate} \item  If the minimal polynomial has degree 3 and \( \operatorname{tr}(A) = x + i y \) with \( x, y \ne 0 \), then \( A \) remains \( c \)-reversible.
    
    \item If \( \operatorname{tr}(A) \in \mathbb{R} \), then \( A \) is \( c \)-reversible if and only if
    \[
    c_2 \ne \frac{c_3^2}{4} + 2.
    \]
    And if \( \operatorname{tr}(A) \) is purely imaginary, then \( A \) is \( c \)-reversible if and only if
    \[
    c_2 \ne \frac{c_3^2}{4} - 2.
    \]
\end{enumerate}
\end{enumerate}
\end{theorem}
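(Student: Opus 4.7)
The plan is to apply Theorem~\ref{CM} throughout, which reduces $c$-reversibility of $A$ to the simultaneous $c$-reciprocity of $\chi_A$ and of the minimal polynomial $m_A$. A direct computation gives
\[
\chi_{(\overline A)^{-1}}(x) = x^4 - \overline{c_1}x^3 + \overline{c_2}x^2 - \overline{c_3}x + 1,
\]
so $\chi_A$ is $c$-reciprocal if and only if $c_3 = \overline{c_1}$ and $c_2 \in \R$. Taking contrapositives immediately yields (i) and (ii).

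Henceforth assume $\chi_A$ is $c$-reciprocal, so the task reduces to deciding when $m_A$ is also $c$-reciprocal. For part~(1), the identity $\operatorname{tr}(A^n) = \sum_j \lambda_j^n$ (summed with algebraic multiplicity) shows that a bounded trace sequence forces every eigenvalue of $A$ onto the unit circle; then every root $\lambda$ of $m_A$ satisfies $\overline{\lambda}^{-1} = \lambda$, and $m_A$ is automatically $c$-reciprocal. For part~(2), I would enumerate the $4\times 4$ Jordan types with $\deg m_A \in \{1,2,4\}$: degree~$1$ forces $A = \lambda I$ with $\lambda^4 = 1$ (bounded trace, excluded); degree~$4$ gives $m_A = \chi_A$ up to sign, already $c$-reciprocal; and for degree~$2$, every algebraic-multiplicity pattern other than the diagonalizable $(2,2)$ pushes all eigenvalues to the unit circle. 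In the remaining case, $c$-reciprocity of $\chi_A$ together with unboundedness of the trace forces the two eigenvalues into a conjugate-reciprocal pair $\lambda, \overline{\lambda}^{-1}$, so $m_A = (x-\lambda)(x-\overline{\lambda}^{-1})$ is visibly $c$-reciprocal.

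The heart of the argument is part~(3), where $\deg m_A = 3$ and, by elimination from (1) and (2), the trace is unbounded. A case split on the Jordan type shows that the only configuration under which $m_A$ fails to be $c$-reciprocal is
\[
\chi_A(x) = \bigl((x-\lambda)(x-\mu)\bigr)^2, \qquad m_A(x) = (x-\lambda)^2(x-\mu), \qquad \mu = \overline{\lambda}^{-1}, \qquad |\lambda| \neq 1,
\]
together with the symmetric variant in which the Jordan blocks for $\lambda$ and $\mu$ are swapped. Every other Jordan pattern of degree~$3$ either has all eigenvalues on the unit circle (so $\overline{\lambda}^{-1} = \lambda$ trivially) or, in the three-eigenvalue case, combines one unit-modulus eigenvalue with a conjugate-reciprocal pair; in both scenarios $m_A$ is $c$-reciprocal. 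The determinant-one condition now forces $\lambda^2 \mu^2 = 1$, so $\lambda\mu = \pm 1$; combined with $\mu = \overline{\lambda}^{-1}$ this gives $\lambda \in \R \setminus \{0,\pm 1\}$ when $\lambda\mu = 1$, and $\lambda \in i\R \setminus \{0\}$ when $\lambda\mu = -1$. Expanding yields $\chi_A = (x^2 - (c_3/2)x + 1)^2$ with $c_2 = c_3^2/4 + 2$ and $\operatorname{tr}(A) \in \R$ in the first case, and $\chi_A = (x^2 - (c_3/2)x - 1)^2$ with $c_2 = c_3^2/4 - 2$ and $\operatorname{tr}(A) \in i\R$ in the second. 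Since $\operatorname{tr}(A) = x + iy$ with $x, y \neq 0$ excludes the bad case outright, (3)(a) follows.

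For the converses in (3)(b), suppose $\operatorname{tr}(A) \in \R$ and $c_2 = c_3^2/4 + 2$ (the purely imaginary case is identical). Then $\chi_A$ is a perfect square $p(x)^2$ with $\deg p = 2$; the standing assumption $\deg m_A = 3$ rules out both the diagonalizable reduction to degree~$2$ and the non-derogatory extension to degree~$4$, while the implicit unboundedness of the trace excludes the sub-cases $c_3 = \pm 4$ (where $\chi_A = (x \mp 1)^4$) and $|c_3| < 4$ (where the roots of $p$ are a unit-circle conjugate pair), both of which would produce bounded traces. Hence $A$ must realize the bad Jordan pattern and $m_A$ fails to be $c$-reciprocal. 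The principal technical obstacle throughout is exactly this Jordan-form bookkeeping in part~(3): one must verify that every Jordan structure outside the distinguished bad pattern produces a $c$-reciprocal $m_A$, so that the coefficient identities $c_2 = c_3^2/4 \pm 2$ genuinely characterize when $c$-reversibility fails.
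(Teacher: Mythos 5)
Your proposal is correct and follows essentially the same route as the paper: reduce everything to $c$-reciprocity of $\chi_A$ and $m_A$ via Theorem~\ref{CM}, use boundedness of $\{\operatorname{tr}(A^n)\}$ to force unit-modulus spectra, dispose of $\deg m_A \ne 3$ by Jordan-type enumeration, and isolate the unique bad degree-$3$ configuration $J(\lambda,2)\oplus \overline{\lambda}^{-1}I_2$ with $\lambda$ real or purely imaginary, yielding the identities $c_2 = c_3^2/4 \pm 2$. The only difference is presentational: you work with the factorization $\chi_A = p(x)^2$ and the determinant constraint $\lambda\mu=\pm1$ where the paper lists explicit normal forms $A_1, A_2, A_{3,\pm1}, A_{3,\pm i}$ and computes $c_2$ for each, but the content is identical.
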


\subsection*{Structure of the paper}
After providing an introduction in Section \ref{sec:intro}, we discuss some preliminaries in Section \ref{sec:prelim}. In Section~\ref{sec:SCR}, we prove that every $c$-reversible element is strongly $c$-reversible. In section,~\ref{sec:AC} we characterize $c$-reversible elements using the resultant of their characteristic polynomials, and also describe $SL(4,\mathbb{C})$ in terms of trace conditions.

\section{Preliminaries}\label{sec:prelim}
In this section we observe some basic observations about $c$-reversibility. 
	\begin{lemma}
	The $c$-reversible and strongly $c$-reversible are conjugacy invariants in $\SL(n, \C)$ .
\end{lemma}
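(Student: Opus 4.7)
The plan is to show directly that if $g$ satisfies the defining relation for $c$-reversibility (or strong $c$-reversibility), then so does any conjugate $kgk^{-1}$, by producing an explicit conjugating element.

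First I would start with $g \in \SL(n,\C)$ that is $c$-reversible, with witness $h \in \SL(n,\C)$ satisfying $hgh^{-1} = \overline{g}^{-1}$. Let $k \in \SL(n,\C)$ and set $g' = kgk^{-1}$. The key observation is that complex conjugation commutes with conjugation in the sense that $\overline{g'} = \overline{k}\,\overline{g}\,\overline{k}^{-1}$, so $\overline{g'}^{-1} = \overline{k}\,\overline{g}^{-1}\,\overline{k}^{-1}$. This suggests the candidate $h' := \overline{k}\,h\,k^{-1}$, which lies in $\SL(n,\C)$ since $\det(\overline{k}) = \overline{\det k} = 1$. A short computation,
\[
h' g' (h')^{-1} = \overline{k}\,h\,k^{-1} \cdot k g k^{-1} \cdot k\,h^{-1}\,\overline{k}^{-1} = \overline{k}\,(hgh^{-1})\,\overline{k}^{-1} = \overline{k}\,\overline{g}^{-1}\,\overline{k}^{-1} = \overline{g'}^{-1},
\]
shows that $g'$ is $c$-reversible with witness $h'$.

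For strong $c$-reversibility, assume further that $h\,\overline{h} = 1$. Then with the same $h' = \overline{k}\,h\,k^{-1}$, I would compute $\overline{h'} = k\,\overline{h}\,\overline{k}^{-1}$ and observe
\[
h'\,\overline{h'} = \overline{k}\,h\,k^{-1} \cdot k\,\overline{h}\,\overline{k}^{-1} = \overline{k}\,(h\overline{h})\,\overline{k}^{-1} = 1,
\]
so $h'$ is an involutory $c$-symmetry for $g'$.

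No step here is a genuine obstacle; the whole lemma is a formal manipulation with the twisted conjugation $g \mapsto \overline{g}^{-1}$, and the only mildly subtle point is choosing $\overline{k}$ (rather than $k$) on the left of $h'$ so that the complex conjugates line up. I would present the $c$-reversible and strongly $c$-reversible cases in one clean block, since the computation for the second is an immediate extension of the first.
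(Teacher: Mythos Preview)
Your proof is correct and is essentially identical to the paper's own argument: both take the witness $h' = \overline{k}\,h\,k^{-1}$ and verify $h'g'(h')^{-1} = \overline{g'}^{-1}$ and $h'\overline{h'}=1$ by the same direct computations. There is nothing to add.
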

\begin{proof}
	Suppose $g$ is $c$-reversible, then there exists an element $h \in \SL(n,\C)$ such that, $hgh^{-1} = \bar{g}^{-1}$. Now for any $k \in \SL(n,\C)$, we have:
	\[
		(\bar{k}hk^{-1})~(kgk^{-1})~(\bar{k}hk^{-1})^{-1} 
	\]
	\[
		=\bar{k}hk^{-1}kgk^{-1}kh^{-1}\bar{k}^{-1}
	\]
	\[
		=\bar{k}\bar{g}^{-1}\bar{k}^{-1} = \overline{(kgk^{-1})}^{-1}.
	\]
	So, $kgk^{-1}$ is $c$-reversible.

	If $h\bar{h} = e$, then $(\bar{k}hk^{-1})\overline{(\bar{k}hk^{-1})} = h\bar{h} = e$. Therefore, if $g$ is strongly $c$-reversible, then $kgk^{-1}$ is strongly $c$-reversible. 
\end{proof}
	\begin{lemma}
	An element $g$ in $\SL(n, \C)$ is strongly $c$-reversible if and only if $g$ is a product of two involutory $c$-symmetries.
\end{lemma}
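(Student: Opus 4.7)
The plan is to prove both implications by direct algebraic manipulation, mirroring the classical equivalence between strong reversibility and being a product of two involutions. The key observation that underlies both directions is that the defining condition $h\bar h = 1$ of an involutory $c$-symmetry is equivalent to $\bar h = h^{-1}$, so the complex-conjugation operation behaves like inversion on this distinguished set of elements.

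For the forward direction, suppose $g$ is strongly $c$-reversible with witness $h$, so that $hgh^{-1} = \bar g^{-1}$ and $h\bar h = 1$. I would propose the decomposition
\[
g = h^{-1} \cdot (hg),
\]
and verify that each factor is an involutory $c$-symmetry. The first is immediate since $h^{-1}\overline{h^{-1}} = h^{-1}\bar h^{-1} = h^{-1}h = 1$. For the second, the computation $(hg)\overline{(hg)} = hg\bar h\bar g = (hgh^{-1})\bar g$ reduces, by the reversibility relation, to $\bar g^{-1}\bar g = 1$.

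For the reverse direction, suppose $g = h_1 h_2$ with each $h_i\bar{h_i} = 1$. I would show that $h_1^{-1}$ is itself an involutory $c$-symmetry witnessing the strong $c$-reversibility of $g$. On one hand, $h_1^{-1} g h_1 = h_2 h_1$ by cancellation. On the other hand, $\bar g = \bar{h_1}\bar{h_2} = h_1^{-1}h_2^{-1}$, so $\bar g^{-1} = h_2 h_1$; the two agree. Since $h_1^{-1}\overline{h_1^{-1}} = h_1^{-1}h_1 = 1$, the element $h_1^{-1}$ is an involutory $c$-symmetry, finishing the implication.

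The argument is essentially bookkeeping with the identities $\overline{AB} = \bar A \bar B$ and $\overline{A^{-1}} = \bar A^{-1}$ combined with $\bar h_i = h_i^{-1}$, so there is no genuine obstacle. The only subtlety is spotting the decomposition $g = h^{-1}\cdot(hg)$, which is forced by the requirement that the first factor already satisfy the involutory condition on its own.
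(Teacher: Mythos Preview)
Your proof is correct and follows essentially the same approach as the paper. In the forward direction the paper writes $g = (h^{-1})(\bar g^{-1}h)$, which is literally your decomposition $g = h^{-1}(hg)$ rewritten using $hg = \bar g^{-1}h$; in the reverse direction both you and the paper take the inverse of the first factor as the reversing element.
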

\begin{proof}
	Suppose $g$ is strongly $c$-reversible. Then there exists an element $h \in \SL(n, \C)$ such that 
	\[
	hgh^{-1} = \bar{g}^{-1} \quad \text{with} \quad h\bar{h} = e.
	\]
	\(\implies\) \(g = (h^{-1})(\bar{g}^{-1}h)\). Since $h$ is an involutary $c$-symmetry, $h^{-1}$ is also an involutary $c$-symmetry. Also, $\bar{g}^{-1}h$ is an involutary $c$-symmetry, because
	\[
	(\bar{g}^{-1}h)\overline{(\bar{g}^{-1}h)} = \bar{g}^{-1}hg^{-1}\bar{h} = h\bar{h} = e.
	\]
	Conversely, suppose $g$ is a product of two involutory $c$-symmetries. That is, suppose $g = ab$, where $a\bar{a} = e$ and $b\bar{b} = e$. Then 
	\[
	a^{-1}ga = ba = \bar{g}^{-1}.
	\]
	Thus, $g$ is strongly $c$-reversible.
\end{proof}
    \\
   
    Let 
\[
Z(g) = \{ h \in \SL(n, \C) \mid gh = hg \}
\]
be the centralizer of \( g \), and
\[
R(g) = \{ h \in G \mid hgh^{-1} = \bar{g}^{-1} \}
\]
be the set of \( c \)-reversers of \( g \).

Define:
\[
\mathcal{E}(g) = Z(g) \cup R(g).
\]

 It is easy to see that \( \mathcal{E}(g) \) is a subgroup of \( G \).

Observe that if \( h_1, h_2 \in R(g) \), then
\[
h_1 g h_1^{-1} = \bar{g}^{-1} = h_2 g h_2^{-1}
\Rightarrow h_2 h_1^{-1} g (h_2 h_1^{-1})^{-1} = g
\Rightarrow h_2 h_1^{-1} \in Z(g).
\]
\begin{lemma}
	$Z(g)$ is a normal subgroup of the extended centralizer of $\mathcal{E}(g)$ of index atmost two.
	
\end{lemma}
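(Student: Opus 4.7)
The plan is to leverage the observation stated just before the lemma to see that $R(g)$, when nonempty, forms a single coset of $Z(g)$ in $\mathcal{E}(g)$; then $\mathcal{E}(g)$ will consist of at most two cosets of $Z(g)$, and normality will follow from the classical fact that any subgroup of index at most two is automatically normal.

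First I would dispose of the trivial case $R(g) = \emptyset$, where $\mathcal{E}(g) = Z(g)$ and the conclusion holds with index one. Assume then $R(g) \ne \emptyset$ and fix any $h_0 \in R(g)$. The observation preceding the lemma, applied to $h_0$ and an arbitrary $h \in R(g)$, yields $h_0^{-1} h \in Z(g)$, hence $R(g) \subseteq h_0 Z(g)$. For the reverse inclusion, any $z \in Z(g)$ satisfies
\[
(h_0 z)\, g\, (h_0 z)^{-1} \;=\; h_0\, (z g z^{-1})\, h_0^{-1} \;=\; h_0 g h_0^{-1} \;=\; \bar{g}^{-1},
\]
so $h_0 z \in R(g)$. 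Hence $R(g) = h_0 Z(g)$ is a single left coset of $Z(g)$; it follows that $\mathcal{E}(g) = Z(g) \cup h_0 Z(g)$ is a union of at most two left cosets, giving $[\mathcal{E}(g) : Z(g)] \leq 2$. Invoking the standard fact that any subgroup of index at most two is normal completes the argument.

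Because most of the real content is already encapsulated in the observation preceding the lemma, I do not foresee a substantive obstacle. The only point requiring implicit care is the assertion, already stated in the text, that $\mathcal{E}(g)$ is a subgroup of $\SL(n, \C)$; this is used silently when appealing to the index-two normality principle at the end.
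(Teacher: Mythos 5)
Your proof is correct and rests on the same key observation the paper records just before the lemma, namely that $h_2h_1^{-1}\in Z(g)$ for any two reversers $h_1,h_2\in R(g)$. The paper packages this as a map $\phi:\mathcal{E}(g)\to\mathbb{Z}_2$ sending centralizing elements to $1$ and reversers to $-1$, asserts it is a homomorphism with kernel $Z(g)$, and reads off both normality (kernels are normal) and the index bound (the image has at most two elements); you instead show directly that $R(g)$, when nonempty, is the single coset $h_0Z(g)$, so $\mathcal{E}(g)=Z(g)\cup h_0Z(g)$ has at most two cosets, and then invoke the standard fact that a subgroup of index at most two is normal. The two routes are essentially equivalent, but yours is slightly more careful at the edges: it explicitly treats the case $R(g)=\emptyset$, and it silently accommodates the degenerate situation $g=\bar{g}^{-1}$ (where $Z(g)=R(g)$ and $h_0\in Z(g)$, so the ``two'' cosets coincide), whereas the paper's $\phi$ is, strictly speaking, ambiguously defined there since an element can satisfy both $hgh^{-1}=g$ and $hgh^{-1}=\bar{g}^{-1}$ simultaneously. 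Your reverse inclusion $h_0Z(g)\subseteq R(g)$ is not needed for the index bound (the containment $R(g)\subseteq h_0Z(g)$ already suffices), but it does no harm and sharpens the picture.
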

\begin{proof}
	Define a map $\phi$ : $\mathcal{E}(g) \to \mathbb{Z}_{2}$ by,
	\[
	\displaystyle \phi(h) = \begin{cases} 
		1 & \text{if $hgh^{-1}=g$} \\  
		-1 & \text{if $hgh^{-1}=\bar{g}^{-1}$}
	\end{cases} \]
\\ This is a homomorphrism with kernel $Z(g)$.
\end{proof}
\begin{remark} Although our focus is $\SL(n, \C)$, similar observations should carry over for $\phi$-reversibility in any group $G$, where $\phi: G \to G$ is an anti-automorphism of $G$, compare \cite{dc}.
\end{remark} 

\section{$c$-Reversibility in ${\rm SL}(n, \C)$}\label{sec:SCR}

	\begin{pro}\label{JB}
		Let $A$ $\in$ ${\rm SL}(n,\mathbb{ C})$ with jordan cannonical form consisiting of Jordan block $J(\lambda_1,n_1)$, $J(\lambda_2,n_2), \ldots,  J(\lambda_k,n_k)$ where $\lambda_i$ $\neq$ 0, is c-reversible if and only if the block can be partioned into pairs \{ $J(\lambda,m)$, $J(\overline{\lambda}^{-1},m)$\} or, singeltons \{$J(\mu,m)$\} where $\lambda,\mu \in \mathbb{C}\setminus \{0\}$ and $|{\lambda}| \neq 1$, $|\mu|=1$.
	\end{pro}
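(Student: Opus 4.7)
The plan is to recast $c$-reversibility as a conjugacy statement and then read off the block structure from the uniqueness of the Jordan canonical form. Concretely, $A$ is $c$-reversible in $\SL(n,\C)$ if and only if $A$ and $\overline{A}^{-1}$ are conjugate in $\SL(n,\C)$; since both matrices already share the determinant $1$, this is equivalent to conjugacy in ${\rm GL}(n,\C)$, hence to the equality of their Jordan canonical forms. So the whole proposition reduces to comparing the Jordan data of $A$ with that of $\overline{A}^{-1}$.

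For the Jordan data of $\overline{A}^{-1}$, I would observe two elementary facts. First, entrywise complex conjugation sends the Jordan block $J(\lambda_i,n_i)$ to $J(\overline{\lambda_i},n_i)$, because the superdiagonal of $1$'s is fixed. Second, for any nonzero $\mu$, the inverse of a Jordan block $J(\mu,m)$ is a single unipotent-times-$\mu^{-1}$ matrix with the same kernel flag structure, hence similar to $J(\mu^{-1},m)$. Combining these, the Jordan form of $\overline{A}^{-1}$ consists of the blocks $\{J(\overline{\lambda_i}^{-1},n_i)\}_{i=1}^{k}$.

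Now the ``only if'' direction follows directly: the multiset of blocks $\{J(\lambda_i,n_i)\}$ must coincide with $\{J(\overline{\lambda_i}^{-1},n_i)\}$. Pick any block $J(\lambda,m)$ appearing with some multiplicity. Either $\overline{\lambda}^{-1}=\lambda$, equivalently $|\lambda|=1$, in which case the block is its own ``partner'' and is listed as a singleton; or $\overline{\lambda}^{-1}\neq\lambda$, in which case $J(\overline{\lambda}^{-1},m)$ must occur with the same multiplicity, so the blocks pair up $\{J(\lambda,m),J(\overline{\lambda}^{-1},m)\}$. This is exactly the claimed partition.

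For the ``if'' direction I would build a reverser block-wise. For a singleton $J(\mu,m)$ with $|\mu|=1$, the matrices $J(\mu,m)$ and $\overline{J(\mu,m)}^{-1}$ have the same Jordan form $J(\mu,m)$, so some $P_0\in{\rm GL}(m,\C)$ realizes the conjugation. For a pair $\{J(\lambda,m),J(\overline{\lambda}^{-1},m)\}$, the direct sum $J(\lambda,m)\oplus J(\overline{\lambda}^{-1},m)$ and its conjugate-inverse have matching Jordan forms (the two blocks simply swap), so again a block conjugator $P_0$ exists and can in fact be written explicitly as a swap composed with block-wise similarities. Assembling these into a block-diagonal $P$ after transporting $A$ to a Jordan basis produces $P\in{\rm GL}(n,\C)$ with $PAP^{-1}=\overline{A}^{-1}$; rescaling $P$ by an $n$-th root of $\det P$ gives a $c$-reverser in $\SL(n,\C)$. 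The main obstacle is not conceptual but bookkeeping: one must be careful that the block-diagonal assembly is done with respect to consistent Jordan bases for $A$ and $\overline{A}^{-1}$ so that the local conjugators genuinely paste together into a single matrix intertwining $A$ and $\overline{A}^{-1}$. Once the Jordan-block matching from the first half of the argument is in hand, this pasting is routine.
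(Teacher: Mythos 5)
Your proposal is correct and follows essentially the same route as the paper: compute the Jordan data of $\overline{A}^{-1}$ (conjugation fixes the superdiagonal and sends $\lambda_i$ to $\overline{\lambda_i}$, inversion of a single block is similar to the block with reciprocal eigenvalue), then invoke uniqueness of the Jordan canonical form to reduce $c$-reversibility to a multiset equality of blocks, which is exactly the stated pairing/singleton partition. Your extra care about passing from a ${\rm GL}(n,\C)$-conjugator to an $\SL(n,\C)$-conjugator by rescaling with an $n$-th root of the determinant is a detail the paper leaves implicit, but it does not change the argument.
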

	\begin{proof}
		The inverse of $\overline{J(\lambda,n)}$ where $\lambda \neq 0$ is an $n \times n$ upper triangular matrix with entries $\overline{\lambda}^{-1}$ on the diagonal. Since the geometric multiplicity of $J(\lambda,n)$ is 1. We can deduce that the Jordan normal form $\overline{J(\lambda,n)}^{-1}$ is a single block $J(\overline{\lambda}^{-1},n)$. More generally, if $A \in \SL(n,\mathbb{C})$ is in Jordan normal form with blocks $J(\lambda_{1},n_{1}),\ldots, J(\lambda_{k},n_{k})$,  then the Jordan normal form of $\bar{A}^{-1}$ has blocks $J(\overline \lambda_{1}^{-1},n_{1}),\ldots, J(\overline \lambda_{k}^{-1},n_{k})$.
		
		Since the Jordan canonical form is unique up to permutation, So $A$ is conjugate to $\overline{A}^{-1}$ if and only if
		$$\{J(\lambda_{1},n_{1}),\ldots,J(\lambda_{k},n_{k})\} = \{J(\overline \lambda_{1}^{-1},n_{1}),\ldots, J(\overline \lambda_{k}^{-1},n_{k})\}.$$
		This completes the proof. 
	\end{proof}
    \\
    \subsection*{Proof of \thmref{CM}}

Suppose \( A \in \mathrm{SL}(n, \mathbb{C}) \).

First assume \( A \) is \( c \)-reversible. By Proposition~\ref{JB}, the Jordan canonical form of \( A \) consists of Jordan blocks that either appear in \( c \)-reciprocal pairs
  \[
  \{ J(\lambda, m),\, J(\overline{\lambda}^{-1}, m) \}
  \quad \text{with } |\lambda| \ne 1,
  \]
  or as singleton blocks
  \[
  \{ J(\mu, m) \}
  \quad \text{with } |\mu| = 1.
  \]
  Thus, the eigenvalues of \( A \) are closed under the map \( \lambda \mapsto \overline{\lambda}^{-1} \), and the Jordan structure is preserved under this symmetry. It follows that both the characteristic and minimal polynomials of \( A \) are \( c \)-reciprocal.

  \medskip Conversely, suppose the characteristic and minimal polynomials of \( A \) are \( c \)-reciprocal. Then the sets of eigenvalues and elementary divisors are closed under \( \lambda \mapsto \overline{\lambda}^{-1} \). Consequently, the Jordan blocks in the Jordan canonical form of \( A \) must occur in the pattern described in Proposition~\ref{JB}, either as
  \begin{itemize}
      \item pairs \( \{ J(\lambda, m), J(\overline{\lambda}^{-1}, m) \} \) with \( |\lambda| \ne 1 \), or
      \item singletons \( \{ J(\mu, m) \} \) with \( |\mu| = 1 \).
  \end{itemize}
  By the proposition ~\ref{JB}, this implies that \( A \) is \( c \)-reversible. \qed
  \\
  \begin{cor}\label{EPL}
      Let $A\in \SL(n,\mathbb{ C})$ such that A is either elliptic, parabolic or loxodromic then $A$ is $c$-reversible.
  \end{cor}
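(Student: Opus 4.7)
The plan is to deduce this corollary directly from \thmref{CM}, which characterizes $c$-reversibility by the $c$-reciprocity of both the characteristic and minimal polynomials of $A$. Accordingly, for each of the three dynamical classes I would verify that both polynomials are $c$-reciprocal, i.e.\ that their root multisets are closed under the involution $\lambda \mapsto \overline{\lambda}^{-1}$ with multiplicities preserved.

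In the elliptic and parabolic cases, every eigenvalue $\lambda$ of $A$ satisfies $|\lambda|=1$, so $\overline{\lambda}^{-1}=\lambda$ and the involution acts trivially on the spectrum. The characteristic polynomial is therefore automatically $c$-reciprocal. The roots of the minimal polynomial are the distinct eigenvalues, with multiplicities equal to the sizes of the largest Jordan blocks at each eigenvalue; since each root is individually fixed by the involution, the minimal polynomial is $c$-reciprocal as well. This handles the semisimple (elliptic) and non-semisimple (parabolic) subcases in one stroke.

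For the loxodromic case, I would appeal to the $k$-loxodromic structure already recalled in the introduction: the non-unit-modulus eigenvalues come in conjugate-reciprocal pairs $\{re^{i\theta}, r^{-1}e^{i\theta}\}$, while all remaining eigenvalues lie on the unit circle. Hence the eigenvalue multiset is closed under $\lambda \mapsto \overline{\lambda}^{-1}$ with equal multiplicities on paired entries, so the characteristic polynomial is $c$-reciprocal. Since loxodromic elements are semisimple, the minimal polynomial has simple roots coinciding with the distinct eigenvalues, and the same closure argument yields its $c$-reciprocity. Invoking \thmref{CM} in each of the three cases then produces the conclusion. There is no substantive obstacle here; the only point that deserves a line of care is confirming that the multiplicities in the parabolic minimal polynomial match under the involution, which is automatic since the involution fixes each eigenvalue individually.
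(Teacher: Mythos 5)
Your reduction to \thmref{CM} is the intended route (the paper states this corollary without proof, as an immediate consequence of that theorem), and your treatment of the elliptic and parabolic cases is complete: every eigenvalue is fixed by $\lambda \mapsto \overline{\lambda}^{-1}$, so both the characteristic and minimal polynomials are trivially $c$-reciprocal.

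The loxodromic case, however, contains a genuine gap. You write that ``the non-unit-modulus eigenvalues come in conjugate-reciprocal pairs $\{re^{i\theta}, r^{-1}e^{i\theta}\}$,'' appealing to the $k$-loxodromic structure. But Definition~\ref{def:classification} defines \emph{loxodromic} only as ``semisimple with at least one eigenvalue of modulus $\ne 1$''; the pairing of eigenvalues under $\lambda \mapsto \overline{\lambda}^{-1}$ is an \emph{additional} hypothesis, namely that of being $k$-loxodromic, and the paper only records the inclusion in one direction (a regular $k$-loxodromic is a semisimple loxodromic element, not conversely). Concretely, $A = \mathrm{diag}(2,\,2,\,1/4,\,1) \in \SL(4,\C)$ is semisimple with a non-unit-modulus eigenvalue, hence loxodromic in the sense of Definition~\ref{def:classification}, yet its eigenvalue multiset $\{2,2,1/4,1\}$ is not closed under $\lambda \mapsto \overline{\lambda}^{-1}$, so by \propref{JB} (or \thmref{CM}) it is not $c$-reversible. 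Thus the step where you assert closure of the spectrum for a general loxodromic element does not follow from the stated definitions; your argument is valid only under the $k$-loxodromic hypothesis, and as written the loxodromic clause of the corollary cannot be proved without it. You should either restrict to $k$-loxodromic elements or make explicit that you are assuming the conjugate-reciprocal pairing of the non-unit-modulus eigenvalues.
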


	\begin{lemma}\label{BL1} Let $n>1$. Assume $r <n$ then the following holds. 
		$$\binom{n-1}{r}-\binom{n}{r+1}+\binom{n}{r+2}+....+(-1)^{n-r}\binom{n}{n} = 0.$$
		
	\end{lemma}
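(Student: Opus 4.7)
The plan is to reduce the alternating binomial sum to a single binomial coefficient via Pascal's rule and a telescoping argument. First I would rewrite the identity in closed form as
\[
\binom{n-1}{r} + \sum_{k=r+1}^{n} (-1)^{k-r}\binom{n}{k} = 0,
\]
so what I actually need to prove is that
\[
\sum_{k=r+1}^{n} (-1)^{k-r}\binom{n}{k} = -\binom{n-1}{r}.
\]

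To handle the sum on the left, I would apply Pascal's identity $\binom{n}{k} = \binom{n-1}{k} + \binom{n-1}{k-1}$ term by term. This splits the sum into two pieces; in the second piece I would shift the index $j = k-1$, obtaining an alternating sum over $j$ that runs from $r$ to $n-1$ with sign $(-1)^{j-r}$. Lining the two sums up, every contribution with index between $r+1$ and $n-1$ cancels in pairs (the two signs are opposite), the boundary term at $k=n$ drops because $\binom{n-1}{n}=0$, and the boundary term at $j=r$ contributes exactly $\binom{n-1}{r}$. Thus the sum collapses, up to sign, to $-\binom{n-1}{r}$, which is exactly what is needed.

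Alternatively, and essentially equivalently, I could invoke the two standard facts $\sum_{k=0}^{n}(-1)^k\binom{n}{k}=0$ (for $n \geq 1$) and the partial sum identity $\sum_{k=0}^{r}(-1)^k\binom{n}{k} = (-1)^r\binom{n-1}{r}$ (proved by a short induction on $r$ using Pascal's rule). Subtracting gives immediately $\sum_{k=r+1}^{n}(-1)^k\binom{n}{k} = (-1)^{r+1}\binom{n-1}{r}$, and multiplying by $(-1)^{-r}$ yields the required conclusion.

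There is no real obstacle here; the lemma is a purely combinatorial identity and the only subtlety is bookkeeping the signs correctly when shifting the summation index. The hypothesis $r<n$ is only used to ensure that $\binom{n-1}{r}$ is nonzero in general and that the sum is nonempty, while $n>1$ guarantees $\binom{n-1}{n}=0$ so that the boundary term vanishes cleanly in the telescoping step.
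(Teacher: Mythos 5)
Your proof is correct. The paper proves the identity by induction on $r$: the base case $r=0$ is the vanishing alternating sum $\sum_{k=0}^{n}(-1)^k\binom{n}{k}=0$, and the inductive step applies Pascal's rule once, in the form $\binom{n-1}{k+1}=\binom{n}{k+1}-\binom{n-1}{k}$, to pass from $r=k$ to $r=k+1$. Your primary route instead applies Pascal's rule to every term of the tail sum simultaneously and lets the two resulting sums cancel termwise, which avoids induction altogether and collapses the whole computation into one telescoping step; your second route (subtracting the partial-sum identity $\sum_{k=0}^{r}(-1)^k\binom{n}{k}=(-1)^r\binom{n-1}{r}$ from the full alternating sum) is essentially the paper's induction repackaged, since that partial-sum identity is what the paper's induction is implicitly establishing. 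All three arguments rest on the same two facts — Pascal's rule and the vanishing alternating row sum — so the difference is organizational rather than substantive; your direct version is arguably cleaner to verify. One trivial remark: $\binom{n-1}{n}=0$ holds for every $n\geq 1$, so the hypothesis $n>1$ is not actually needed for that boundary term to vanish.
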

	\begin{proof}
		We proceed by induction on $r$. For $r=0$ that is, 	\[\binom{n-1}{0}-\binom{n}{1}+\binom{n}{2}+....+(-1)^{n}\binom{n}{n} \]
		\[=\binom{n}{0}-\binom{n}{1}+\binom{n}{2}+....+(-1)^{n}\binom{n}{n} =0\]
        which is a well-known identity (alternating sum of binomial coefficients in a row of Pascal's triangle equals zero for $n\geq1)$. Thus the base case holds.

		Assume that the identity hold for $r=k$. 	We must show it holds for \( r = k+1 \):
\[
\binom{n-1}{k+1} - \binom{n}{k+2} + \binom{n}{k+3} - \cdots + (-1)^{n-k-1} \binom{n}{n} = 0.
\]

Using the identity \( \binom{n-1}{k+1} = \binom{n}{k+1} - \binom{n-1}{k} \), we rewrite:
\[
\binom{n-1}{k+1} = \binom{n}{k+1} - \binom{n-1}{k}.
\]
Substituting this in:
\[
\bigg (\binom{n}{k+1} - \binom{n-1}{k}\bigg) - \binom{n}{k+2} + \binom{n}{k+3} - \cdots + (-1)^{n-k-1} \binom{n}{n}.
\]
Group terms:
\[
\left[ \binom{n}{k+1} - \binom{n}{k+2} + \binom{n}{k+3} - \cdots + (-1)^{n-k-1} \binom{n}{n} \right] - \binom{n-1}{k}.
\]
By the inductive hypothesis, the bracketed expression equals \( \binom{n-1}{k} \), so the total becomes:
\[
\binom{n-1}{k} - \binom{n-1}{k} = 0.
\]

This completes the inductive step and hence the proof.
	\end{proof}
	\begin{lemma}\label{BL2}
Assume \( k < n \). Then the following identity holds:
\[
\binom{k}{k}\binom{n}{k} - \binom{k+1}{k}\binom{n}{k+1} + \binom{k+2}{k}\binom{n}{k+2} - \cdots + (-1)^{n-k} \binom{n}{k} \binom{n}{n} = 0.
\]
\end{lemma}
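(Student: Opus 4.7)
The plan is to reduce the alternating sum to an instance of the binomial theorem by means of the classical ``subset of a subset'' identity
\[
\binom{j}{k}\binom{n}{j} \;=\; \binom{n}{k}\binom{n-k}{j-k},
\]
valid for $0\le k\le j\le n$. This identity is immediate by counting, in two ways, the ordered pairs $(S,T)$ with $S\subseteq T\subseteq\{1,\dots,n\}$, $|S|=k$, $|T|=j$: either choose $T$ first and then $S\subseteq T$, or choose $S$ first and then fill up $T\setminus S$ from the remaining $n-k$ elements.

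First I would recast the displayed alternating sum in compact summation form as
\[
\sum_{j=k}^{n}(-1)^{j-k}\binom{j}{k}\binom{n}{j},
\]
so that the lemma becomes the statement that this sum vanishes whenever $k<n$.

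Next, applying the subset-of-a-subset identity termwise and pulling the constant factor $\binom{n}{k}$ outside the sum, the expression becomes
\[
\binom{n}{k}\sum_{j=k}^{n}(-1)^{j-k}\binom{n-k}{j-k} \;=\; \binom{n}{k}\sum_{i=0}^{n-k}(-1)^{i}\binom{n-k}{i}
\]
after the substitution $i=j-k$. By the binomial theorem the inner sum equals $(1-1)^{n-k}$, which is $0$ under the hypothesis $k<n$ since then $n-k\ge 1$. This gives the result.

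I do not expect any real obstacle here; the only thing to notice is the subset-of-a-subset rewriting, after which the proof collapses to a one-line calculation. An alternative route would be induction on $k$ in the spirit of Lemma~\ref{BL1}, using Pascal's rule to peel off a single term and invoke the previous case; this works as well but requires a separate base case and is less clean than the substitution approach outlined above.
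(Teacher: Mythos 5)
Your proof is correct and follows exactly the same route as the paper: both apply the subset-of-a-subset identity $\binom{j}{k}\binom{n}{j}=\binom{n}{k}\binom{n-k}{j-k}$, factor out $\binom{n}{k}$, and conclude via the vanishing alternating sum $(1-1)^{n-k}=0$ for $k<n$. No differences worth noting.
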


\begin{proof}
We use the identity:
\[
\binom{j}{k} \binom{n}{j} = \binom{n}{k} \binom{n-k}{j-k}
\quad \text{for } k \leq j \leq n.
\]
This allows us to rewrite each term in the sum as:
\[
\binom{j}{k} \binom{n}{j} = \binom{n}{k} \binom{n-k}{j-k}.
\]

Hence, the entire expression becomes:
\[
\sum_{j = k}^{n} (-1)^{j-k} \binom{j}{k} \binom{n}{j}
= \binom{n}{k} \sum_{i=0}^{n-k} (-1)^i \binom{n-k}{i}.
\]

This is the alternating sum of the binomial coefficients in the \( (n-k) \)-th row of Pascal's triangle, which evaluates to:
\[
\sum_{i=0}^{n-k} (-1)^i \binom{n-k}{i} = (1 - 1)^{n-k} = 0.
\]

Thus, the full expression simplifies to the value $0$. 
This completes the proof.
\end{proof}

\begin{lemma}\label{c-B}
	Define a matrix $B = [b_{i,j}]_{1 \leq i,j \leq n}$, where
\[
b_{i,j} = 
\begin{cases} 
0 & \text{if } j < i, \\  
(-1)^{j+1} \binom{j-2}{i-2} \dfrac{b}{\lambda^{i+j-2}} & \text{ if } j \geq i ~\text{ and }~ i > 1, \\  
b & \text{if } i = j = 1, \\
0 & \text{if } i = 1 ~\text{ and }~ 2 \leq j \leq n,
\end{cases}
\]
where \( |b| = 1 \) and \( |\lambda| = 1 \).

The matrix $B$ defined above satisfies $B\overline{B} = I$.
\end{lemma}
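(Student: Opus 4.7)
The plan is to verify $B\overline{B}=I$ by a direct computation of matrix entries, exploiting the upper triangular structure of $B$ and reducing the only nontrivial step to the alternating binomial identity of Lemma~\ref{BL2}.

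First I would observe that the definition of $b_{i,j}$ forces $B$ to be upper triangular: the first clause gives $b_{i,j}=0$ for $j<i$, and the last clause makes the first row vanish off the $(1,1)$ entry. Consequently $\overline{B}$ is upper triangular too, and so is $B\overline{B}$. It therefore suffices to check $(B\overline{B})_{ij}=\delta_{ij}$ only for $i\le j$.

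Next I would handle the diagonal. Because $B$ and $\overline{B}$ are upper triangular, only $k=i$ contributes to $(B\overline{B})_{ii}=\sum_k b_{i,k}\overline{b_{k,i}}$, giving $|b_{i,i}|^2$. For $i=1$ this is $|b|^2=1$, and for $i\ge 2$ the formula yields $b_{i,i}=(-1)^{i+1}b/\lambda^{2i-2}$, whose modulus squared equals $|b|^2/|\lambda|^{4i-4}=1$ since $|b|=|\lambda|=1$.

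For the strictly upper off-diagonal entries, the row $i=1$ is immediate, because $b_{1,k}=0$ for $k\ge 2$ forces $(B\overline{B})_{1j}=b\cdot\overline{b_{1,j}}=0$ for $j\ge 2$. The main case is $2\le i<j$: the summation range collapses to $i\le k\le j$ (from the support of $b_{i,k}$ and $\overline{b_{k,j}}$), and after substituting the closed-form expressions and using $b\overline{b}=1$ together with $\overline{\lambda}=\lambda^{-1}$, the factor $\lambda^{i+k-2}\overline{\lambda}^{k+j-2}$ collapses to $\lambda^{i-j}$, which is independent of $k$. Pulling this and the sign $(-1)^{j+1}$ outside, what remains is
\[
(B\overline{B})_{ij}=(-1)^{j+1}\lambda^{j-i}\sum_{k=i}^{j}(-1)^{k+1}\binom{k-2}{i-2}\binom{j-2}{k-2}.
\]
Reindexing by $m=k-2$, $p=i-2$, $q=j-2$, the inner sum becomes (up to an overall sign) $\sum_{m=p}^{q}(-1)^{m}\binom{m}{p}\binom{q}{m}$, which is exactly the alternating sum of Lemma~\ref{BL2} and therefore vanishes because $p<q$.

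The only step requiring care is the final bookkeeping in the off-diagonal calculation: keeping track of the signs $(-1)^{k+1}$ and $(-1)^{j+1}$, confirming that the $\lambda$ and $\overline{\lambda}$ exponents collapse to a $k$-independent constant, and then recognizing the resulting binomial sum as an instance of Lemma~\ref{BL2}. Everything else is routine verification, and no new ideas are required beyond the two binomial identities already established.
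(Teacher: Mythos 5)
Your proof is correct and follows essentially the same route as the paper: reduce to the upper triangular structure, check the diagonal entries directly from $|b|=|\lambda|=1$, and collapse the off-diagonal sums (after noting the $\lambda$-powers become $k$-independent since $\overline{\lambda}=\lambda^{-1}$) to the alternating binomial identity of Lemma~\ref{BL2}. Your explicit separate treatment of the first row ($i=1$) is a small point of extra care that the paper's write-up glosses over, but otherwise the two arguments coincide.
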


\begin{proof}
Since $B$ is upper triangular, so is $B\overline{B}$.

\medskip
\noindent
\textit{Diagonal entries:} For $i = j$,
\[
[B\overline{B}]_{i,i} = b_{i,i} \cdot \overline{b_{i,i}} = \left| \frac{b}{\lambda^{2i-2}} \right|^2 = \frac{|b|^2}{|\lambda|^{4i - 4}} = 1,
\]
since $|b| = 1$ and $|\lambda| = 1$.

\medskip

\noindent
\textit{Off-diagonal entries:} Let $j > i$. Then,
\[
[B\overline{B}]_{i,j} = \sum_{k=0}^{j-i} b_{i,i+k} \cdot \overline{b_{i+k,j}}.
\]
Each term is:
\[
b_{i,i+k} \cdot \overline{b_{i+k,j}} = 
(-1)^{i+k+1} \binom{i+k-2}{i-2} \frac{b}{\lambda^{2i+k-2}} \cdot 
(-1)^{j+1} \binom{j-2}{i+k-2} \frac{\bar{b}}{\bar{\lambda}^{i+k+j-2}}.
\]
\[
= (-1)^{i+j+k+2} \binom{i+k-2}{i-2} \binom{j-2}{i+k-2} 
\cdot \frac{|b|^2}{\lambda^{2i-2} \bar{\lambda}^{i+j-2}}.
\]
So
\[
[B\overline{B}]_{i,j} = 
(-1)^{i+j+2} \frac{1}{\lambda^{2i-2} \bar{\lambda}^{i+j-2}} 
\sum_{k=0}^{j-i} (-1)^k \binom{i+k-2}{i-2} \binom{j-2}{i+k-2}.
\]
The sum vanishes by \lemref{BL2} so $[B\overline{B}]_{i,j} = 0$ for all $j > i$.
\end{proof}

\begin{Example}
For $n = 4$, the matrix $B$ is
\[
B =
\begin{bmatrix}
b & 0 & 0 & 0 \\
0 & \dfrac{-b}{\lambda^2} & \dfrac{b}{\lambda^3} & \dfrac{-b}{\lambda^4} \\
0 & 0 & \dfrac{b}{\lambda^4} & \dfrac{-2b}{\lambda^5} \\
0 & 0 & 0 & \dfrac{-b}{\lambda^6}
\end{bmatrix}.
\]
\end{Example}

	\begin{lemma}
Let \( A \in \mathrm{SL}(n,\mathbb{C}) \) be a Jordan block of the form \( J(\lambda,n) \) with \( |\lambda| = 1 \). Then the matrix \( B \) defined above is a strongly \( c \)-reverser of \( A \).
\end{lemma}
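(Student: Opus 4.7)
The plan is to verify the two conditions that together define a strongly $c$-reverser: \textbf{(i)} $B\overline{B} = I$, which is already established in Lemma~\ref{c-B} and which also supplies invertibility with $B^{-1} = \overline{B}$; and \textbf{(ii)} the conjugation relation $BA = \overline{A}^{-1}B$, which is the substantive step.

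The crucial simplification is the hypothesis $|\lambda|=1$, which forces $\overline{\lambda}^{-1} = \lambda$. Writing $A = \lambda I + N$, where $N$ is the nilpotent shift, and expanding $\overline{A}^{-1} = (\overline{\lambda} I + N)^{-1}$ as a Neumann series, one obtains the clean formula
\[
(\overline{A}^{-1})_{i,j} \;=\; (-1)^{j-i}\lambda^{\,j-i+1}, \qquad j \ge i,
\]
with zero entries below the diagonal. Consequently both $B$ and $\overline{A}^{-1}$ are upper triangular with entries depending only on $\lambda$ and $b$, and (ii) becomes an equality of explicit upper-triangular expressions.

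Next I would verify (ii) entry by entry for $i \le j$. Since $A = \lambda I + N$, the left side takes the compact form $(BA)_{i,j} = \lambda b_{i,j} + b_{i,j-1}$; substituting the explicit formula for $b_{i,j}$ and applying Pascal's identity $\binom{j-2}{i-2} - \binom{j-3}{i-2} = \binom{j-3}{i-3}$ collapses it to a single closed-form expression in $(i,j)$. The right side, by contrast, unfolds to
\[
(\overline{A}^{-1}B)_{i,j} \;=\; \lambda\, b_{i,j} \;+\; \sum_{k=i+1}^{j} (-1)^{k-i}\lambda^{\,k-i+1}\, b_{k,j},
\]
and, after pulling out the common power of $\lambda$ and the shared sign, reduces to a partial alternating binomial sum.

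The main obstacle is evaluating this last sum. I expect it to collapse, via the identity $\sum_{l=0}^{r}(-1)^l\binom{m}{l} = (-1)^r\binom{m-1}{r}$, which is an equivalent rearrangement of Lemma~\ref{BL1}, to exactly the closed-form expression produced on the left. Two degenerate strata need separate verification: the diagonal $j=i$ reduces to a single-term computation, and the first row $i=1$ (where $b_{1,j}=0$ for $j\ge 2$) is handled by inspecting the first few terms of the sum directly. Once these boundary cases and the generic identity are in place, $(BA)_{i,j} = (\overline{A}^{-1}B)_{i,j}$ holds for all $i\le j$, establishing (ii). Combined with (i), this exhibits $B$ as an involutory $c$-symmetry conjugating $A$ to $\overline{A}^{-1}$, which is precisely the assertion that $B$ is a strongly $c$-reverser of $A$.
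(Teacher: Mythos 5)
Your proposal follows essentially the same route as the paper: both verify $BA=\overline{A}^{-1}B$ entry by entry for $i\le j$, using $\overline{\lambda}^{-1}=\lambda$ to write $(\overline{A}^{-1})_{i,j}=(-1)^{j-i}\lambda^{j-i+1}$, reducing the identity to the alternating binomial sum of Lemma~\ref{BL1}, and then citing Lemma~\ref{c-B} for $B\overline{B}=I$. The one point you omit, and which the paper handles explicitly, is that a strongly $c$-reverser must lie in $\mathrm{SL}(n,\mathbb{C})$: the matrix $B$ of Lemma~\ref{c-B} has $\det B=\pm b^{n}/\lambda^{n(n-1)}$, which is only of modulus one, so you must still argue that the free unimodular parameter $b$ can be chosen to make $\det B=1$. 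This is a small but genuine missing step; once added, your argument is complete.
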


\begin{proof}
We aim to show that \( BAB^{-1} = \overline{A}^{-1} \), i.e., \( BA = \overline{A}^{-1}B \).

Since both \( A \) and \( B \) are upper triangular, the product \( BA \) and \( \overline{A}^{-1}B \) are also upper triangular. Therefore, for \( i > j \), we have \( [BA]_{i,j} = [\overline{A}^{-1}B]_{i,j} = 0 \).

Now assume \( i \leq j \). Then:
\[
[BA]_{i,j} = b_{i,j-1} + \lambda b_{i,j},
\]
\[
[\overline{A}^{-1}B]_{i,j} = \sum_{k=i}^j (-1)^{k-i} \lambda^{k-i+1} b_{k,j}.
\]

So we need to verify:
\[
b_{i,j-1} = -\lambda^2 b_{i+1,j} + \lambda^3 b_{i+2,j} - \cdots + (-1)^{j-i} \lambda^{j-i+1} b_{j,j}.
\]

Substituting the formula for \( b_{k,j} \), both sides simplify to:
\[
(-1)^j \binom{j-3}{i-2} \frac{b}{\lambda^{i+j-3}} = 
(-1)^{j+1} \frac{b}{\lambda^{i+j-3}} \sum_{k=0}^{j-i} (-1)^k \binom{j-2}{i-2+k}.
\]

This simplifies to:
\[
(-1)^j \frac{b}{\lambda^{i+j-3}} \left[ \binom{j-3}{i-2} - \binom{j-2}{i-2} + \binom{j-2}{i} - \cdots + (-1)^{j-i} \binom{j-2}{j-2} \right] = 0,
\]
which is zero.  (see Lemma~\ref{BL1}) when setting \( j - 2 = n \), \( i - 2 = r \).

Since
\[
\det(B) = 
\begin{cases}
\displaystyle \frac{b^{n}}{\lambda^{n(n-1)}} & \text{if } n \equiv 0,1 \pmod{4}, \\[8pt]
\displaystyle -\frac{b^{n}}{\lambda^{n(n-1)}} & \text{otherwise}.
\end{cases}
\]
We can choose \( b \in \mathbb{C} \) such that \( |b| = 1 \) and \( \det B = 1 \) as \( |\lambda| = 1 \), this is always possible.

Also, from earlier, we have \( B \overline{B} = I \), so \( B \) is involutary $c$-symmetry and hence a strongly \( c \)-reverser of \( A \).
\end{proof}

Define another matrix $B(\lambda,n)$ by
\[
b_{i,j} = 
\begin{cases}
0 & \text{if } j < i, \\
(-1)^{j+1} \binom{j-2}{i-2} b\, \bar{\lambda}^{i+j-2} & \text{if } j \geq i,\ i>1, \\
b & \text{if } i = j = 1, \\
0 & \text{if } i = 1,\ j \geq 2,
\end{cases}
\]
where $|b|=1$.

We can verify similarly that
\[
B(\lambda,n) J(\bar{\lambda}^{-1},n) B(\lambda,n)^{-1} = \overline{J(\lambda,n)}^{-1}.
\]
\subsection*{Proof of \thmref{2.8}}
If $A$ contains a Jordan block of the form $J(\lambda,n)$ with $|\lambda|=1$, the result follows directly from the earlier construction.

Now suppose $A$ has a pair of Jordan blocks: $J(\lambda,n)$ and $J(\overline{\lambda^{-1}},n)$ with $|\lambda|\ne 0,1$. From the identity above, we get
\[
\overline{B(\lambda,n)}^{-1} J(\lambda,n) \overline{B(\lambda,n)} = \overline{J(\bar{\lambda}^{-1},n)}^{-1}.
\]

Define the matrix
\[
C = \begin{bmatrix}
0 & B(\lambda,n) \\
\overline{B(\lambda,n)}^{-1} & 0
\end{bmatrix}.
\]
Then we have
\[
C \begin{bmatrix}
J(\lambda,n) & 0 \\
0 & J(\bar{\lambda}^{-1},n)
\end{bmatrix} C^{-1} = 
\begin{bmatrix}
\overline{J(\lambda,n)}^{-1} & 0 \\
0 & \overline{J(\bar{\lambda}^{-1},n)}^{-1}
\end{bmatrix}.
\]

Next, note that
\[
\det B(\lambda,n) = 
\begin{cases}
b^n \bar{\lambda}^{n(n-1)} & \text{if } n \equiv 0,1 \mod 4, \\
-b^n \bar{\lambda}^{n(n-1)} & \text{otherwise}.
\end{cases}
\]
We can choose $b\in \mathbb{C}$ such that $b^n \bar{\lambda}^{n(n-1)}$ is real if $n$ is even, and purely imaginary if $n$ is odd, to ensure
\[
\det(C) = (-1)^n \frac{\det B(\lambda,n)}{\det \overline{B(\lambda,n)}} = 1.
\]

Finally, we check that
\[
C \cdot \overline{C} = 
\begin{bmatrix}
0 & B \\
\bar{B}^{-1} & 0
\end{bmatrix}
\begin{bmatrix}
0 & \bar{B} \\
B^{-1} & 0
\end{bmatrix}
= \begin{bmatrix}
I & 0 \\
0 & I
\end{bmatrix}. 
\]
So the matrix 
\[
\begin{bmatrix}
J(\lambda,n) & 0 \\
0 & J(\bar{\lambda}^{-1},n)
\end{bmatrix}
\]
is strongly $c$-reversible. 

Since every matrix, up to conjugacy,  consists of Jordan blocks of the above two types, the theorem follows. 
\qed
	\section{Algebraic Characterization}\label{sec:AC}
    \subsection*{Proof of \thmref{RT}}
Suppose $A$ is $c$-reversible such that it is regular $r$-loxodromic.  It has $r$ pairs of non-unit eigenvalues of the form $\lambda_j=e^{\ell_j + i\theta_j}$ and $\bar\lambda_j=e^{-\ell_j + i\theta_j}$, with $\ell_j > 0$, $s$ eigenvalues on the unit circle of the form $\mu_k=e^{i\phi_k}$, for $k = 1, \dots, s$, where  $n = 2r + s$.

Then the resultant of \( \chi_A \) and its derivative \( \chi_A' \) is given by:

\[
R(\chi_A, \chi_A') = (-1)^{\frac{n(n-1)}{2}}
\prod_{j=1}^r (\lambda_j - \bar\lambda_j^{-1})^2
\prod_{1 \leq j < k \leq r} 
(\lambda_j - \lambda_k)^2
(\bar\lambda_j^{-1} - \bar\lambda_k^{-1})^2
(\lambda_j - \bar\lambda_k^{-1})^2
(\bar\lambda_j^{-1} - \lambda_k)^2
\]
\[
\times
\prod_{j=1}^r \prod_{k=1}^s 
(\lambda_j - \mu_k)^2
(\bar\lambda_j^{-1} - \mu_k)^2
\prod_{1 \leq j < k \leq s} 
(\mu_j - \mu_k)^2
\]

We classify the pairwise eigenvalue differences into the following types:
\begin{itemize}
    \item Between $e^{\ell_j + i\theta_j}$ and $e^{-\ell_j + i\theta_j}$:
    \[
    (e^{\ell_j + i\theta_j} - e^{-\ell_j + i\theta_j})^2 =e^{2i\theta_j} \cdot 4\sinh^2(\ell_j).
    \]
    \item Between  $e^{\ell_j + i\theta_j}$ and $e^{-\ell_k + i\theta_k}$:
    \[
    (e^{\ell_j + i\theta_j} - e^{\ell_k + i\theta_k})^2(e^{-\ell_j + i\theta_j} - e^{-\ell_k + i\theta_k})^2 = e^{2i\theta_j+2i\theta_k}(2\cosh(\ell_j-\ell_k)-2\cos(\theta_j-\theta_k))^2 .
    \]
    Also,
     \[
    (e^{\ell_j + i\theta_j} - e^{-\ell_k + i\theta_k})^2(e^{-\ell_j + i\theta_j} - e^{\ell_k + i\theta_k})^2 = e^{2i\theta_j+2i\theta_k}(2\cosh(\ell_j+\ell_k)-2\cos(\theta_j-\theta_k))^2 .
    \]
    
    \item Between $e^{\ell_j + i\theta_j}$ and $e^{i\phi_k}$:
    \[
    (e^{\ell_j + i\theta_j} - e^{i\phi_k})^2(e^{-\ell_j + i\theta_j} - e^{i\phi_k})^2 = e^{2i\theta_j+2i\phi_k}(2\cosh(\ell_j)-2\cos(\theta_j-\phi_k))^2.
    \]
    \item Between  $e^{i\phi_j}$ and $e^{i\phi_k}$:
    \[
    (e^{i\phi_j} - e^{i\phi_{k}})^2 = -e^{i\phi_j+i\phi_k}(2-2\cos(\theta_j-\theta_k)).
    \]
\end{itemize}

Hence the full product is :
\[
R(\chi_A, \chi_A') = (-1)^{\frac{n(n-1)}{2}}\prod_{j=1}^r e^{2i\theta_j} \prod_{1 \leq j < k \leq r} e^{4i\theta_j+4i\theta_k}\prod_{j=1}^r \prod_{k=1}^s e^{2i\theta_j+2i\phi_k}\cdot \prod_{1 \leq j < k \leq s} -e^{i\phi_j+i\phi_k}  \times (\text{positive real product}).
\]
\\
\[
=(-1)^{\frac{n(n-1)}{2}}(-1)^{\frac{s(s-1)}{2}}\prod_{j=1}^r e^{2i\theta_j} \prod_{j=1}^r e^{4i(r-1)\theta_j}\prod_{j=1}^s e^{i(s-1)\phi_j} \prod_{j=1}^r e^ {2is\theta_j}\prod_{j=1}^s e^ {2is\phi_j}  \times (\text{positive real product}).
\]
\\
\[
=(-1)^{\frac{n(n-1)}{2}}(-1)^{\frac{s(s-1)}{2}}\prod_{j=1}^r e^{2i\theta_j(2r+s-1)} \prod_{j=1}^s e^{i\phi_j(2r+s-1)} \times(\text{positive real product}).
\]
Recall that
\[
\prod_{j=1}^r e^{2i\theta_j(n-1)} \prod_{j=1}^s e^{i\phi_j(n-1)} = (\det A)^{n-1} = 1,
\]
since \( \det A = 1 \). Thus:
\[
R(\chi_A, \chi_A') = (-1)^{\frac{n(n-1)}{2}} (-1)^{\frac{s(s-1)}{2}} \times (\text{positive real product}).
\]

Now, compute the parity of the exponent:
\[
(-1)^{\frac{n(n-1)}{2}} (-1)^{\frac{s(s-1)}{2}} =(-1)^{\frac{(2r+s)(2r+s-1)+s(s-1)}{2}} = (-1)^{\frac{2s^2-2s-2r}{2}} =(-1)^r,
\]

Thus:
\begin{itemize}
    \item If $r$ is even, $R(\chi_A, \chi_A') > 0$ and $A$ is regular $2m$-loxodromic.
    \item If $r$ is odd, $R(\chi_A, \chi_A') < 0$ and $A$ is regular $(2m+1)$-loxodromic.
    \item If $\chi_A$ has repeated roots, then $R(\chi_A, \chi_A') = 0$, and $A$ is not regular.
\end{itemize}

This proves the theorem.
\qed

\medskip

	\subsection{Proof of \thmref{CT}}
    If $c_3\neq \bar{c}_1$ or $c_{2} $ is not a real number then $A$ is not $c$-reversible  as characterestic polynomial is not self dual.\\
    Now assuming $\chi_A$ is self dual. (ie $c_3= \bar{c}_1$ and $c_2 \in \mathbf{R}$).\\
    \begin{enumerate}
    \item 
    If the trace sequence \( \{\operatorname{tr}(A^n)\}_{n \in \mathbb{N}} \) is bounded, then \( A \) is either elliptic or parabolic. Since both elliptic and parabolic elements are \( c \)-reversible by corollary \ref{EPL} 
\item 
Now suppose the trace sequence is unbounded, and the minimal polynomial of \( A \) has degree different from 3. Then, up to conjugacy, \( A \) is of one of the following forms:

\paragraph{Case 1: \( \deg(m_A) = 4 \).}
\[
\begin{bmatrix}
re^{i\theta} & 0 & 0 & 0\\
0 & r^{-1}e^{i\theta} & 0 & 0\\
0 & 0 & se^{-i\theta} & 0\\
0 & 0 & 0 & s^{-1}e^{-i\theta}
\end{bmatrix}, \quad
\begin{bmatrix}
re^{i\theta} & 0 & 0 & 0\\
0 & r^{-1}e^{i\theta} & 0 & 0\\
0 & 0 & e^{i\phi} & 0\\
0 & 0 & 0 & e^{-i(2\theta + \phi)}
\end{bmatrix},
\]
\[
\begin{bmatrix}
re^{i\theta} & 1 & 0 & 0\\
0 & re^{i\theta} & 0 & 0\\
0 & 0 & r^{-1}e^{-i\theta} & 1\\
0 & 0 & 0 & r^{-1}e^{-i\theta}
\end{bmatrix}, \quad
\begin{bmatrix}
e^{i\theta} & 1 & 0 & 0\\
0 & e^{i\theta} & 0 & 0\\
0 & 0 & re^{-i\theta} & 0\\
0 & 0 & 0 & r^{-1}e^{-i\theta}
\end{bmatrix}
\]

\paragraph{Case 2: \( \deg(m_A) = 2 \).}
\[
\begin{bmatrix}
re^{i\theta} & 0 & 0 & 0\\
0 & re^{i\theta} & 0 & 0\\
0 & 0 & r^{-1}e^{i\theta} & 0\\
0 & 0 & 0 & r^{-1}e^{i\theta}
\end{bmatrix}, \quad \text{where } e^{4i\theta} = 1.
\]

\paragraph{Case 3: \( \deg(m_A) = 1 \).} 
There is no loxodromic or loxoparabolic element whose minimal polynomial has degree 1.

In all the above cases, the matrices are \( c \)-reversible by Theorem~1.4.
 \item[(3.a)]
So, now  the minimal polynomial of \( A \) has degree 3. Then, up to conjugacy, \( A \) is of the form:
\[
A_1 =
\begin{bmatrix}
r & 1 & 0 & 0\\
0 & r & 0 & 0\\
0 & 0 & r^{-1} & 0\\
0 & 0 & 0 & r^{-1}
\end{bmatrix}, \quad
A_2 =
\begin{bmatrix}
ri & 1 & 0 & 0\\
0 & ri & 0 & 0\\
0 & 0 & r^{-1}i & 0\\
0 & 0 & 0 & r^{-1}i
\end{bmatrix},
\]
\[
A_3 =
\begin{bmatrix}
e^{i\theta} & 0 & 0 & 0\\
0 & e^{i\theta} & 0 & 0\\
0 & 0 & re^{-i\theta} & 0\\
0 & 0 & 0 & r^{-1}e^{-i\theta}
\end{bmatrix}
\]

Observe that \( A_1 \) and \( A_2 \) are not \( c \)-reversible, but \( A_3 \) is. Also note that \( \operatorname{tr}(A_1) \in \mathbb{R} \) and \( \operatorname{tr}(A_2) \in i\mathbb{R} \). So if the trace is of the form \( x + iy \) with \( x \neq 0 \) and \( y \neq 0 \), then \( A \) is \( c \)-reversible. 

 \item[(b)] Now suppose \(\operatorname{tr}(A) \in \mathbb{R} \) or \(\operatorname{tr}(A) \in i\mathbb{R} \)
 \\Now consider the trace of \( A_3 \):
\[
\operatorname{tr}(A_3) = 2\cos\theta + 2i\sin\theta + r\cos\theta - ir\sin\theta + r^{-1}\cos\theta - ir^{-1}\sin\theta
\]
\[
= (2 + r + r^{-1})\cos\theta + i(2 - r - r^{-1})\sin\theta
\]
If \( \operatorname{tr}(A_3) \in \mathbb{R} \), then \( \sin\theta = 0 \) since \( (2 - r - r^{-1}) \neq 0 \). Similarly, if \( \operatorname{tr}(A_3) \in i\mathbb{R} \), then \( \cos\theta = 0 \).

Thus, if \( \operatorname{tr}(A_3) \in \mathbb{R} \), then \( A_3 \) is of the form
\[
A_{3,1} = 
\begin{bmatrix}
1 & 0 & 0 & 0\\
0 & 1 & 0 & 0\\
0 & 0 & r & 0\\
0 & 0 & 0 & r^{-1}
\end{bmatrix}
\quad \text{or} \quad
A_{3,-1} = 
\begin{bmatrix}
-1 & 0 & 0 & 0\\
0 & -1 & 0 & 0\\
0 & 0 & r & 0\\
0 & 0 & 0 & r^{-1}
\end{bmatrix}, \quad |r| \neq 1
\]

Also if \( \operatorname{tr}(A_3) \in i\mathbb{R} \), then \( A_3 \) is of the form
\[
A_{3,i} = 
\begin{bmatrix}
i & 0 & 0 & 0\\
0 & i & 0 & 0\\
0 & 0 & ri & 0\\
0 & 0 & 0 & r^{-1}i
\end{bmatrix}
\quad \text{or} \quad
A_{3,-i} = 
\begin{bmatrix}
-i & 0 & 0 & 0\\
0 & -i & 0 & 0\\
0 & 0 & ri & 0\\
0 & 0 & 0 & r^{-1}i
\end{bmatrix}, \quad |r| \neq 1
\]

Now first we will distinguish \( A_1, A_{3,1}\), and \(A_{3,-1} \).

\begin{itemize}
    \item For \( A_1 \), the second trace coefficient is
    \[
    c_2 = r^2 + 1 + 1 + 1 + 1 + r^{-2} = (r + r^{-1})^2 + 2 = \frac{c_3^2}{4} + 2
    \]
    \item For \( A_{3,1} \),
    \[
    c_2 = 1 + r + r^{-1} + r + r^{-1} + 1 = 2(r + r^{-1} + 2) - 2 = 2c_3 - 2
    \]
    \item For \( A_{3,-1} \),
    \[
    c_2 = 1 - r - r^{-1} - r - r^{-1} + 1 = -2(r + r^{-1} - 2) - 2 = -2c_3 - 2
    \]
\end{itemize}

If \( A_{3,1} \) (respectively \( A_{3,-1} \)) satisfied \( c_2 = \frac{c_3^2}{4} + 2 \), then \( \operatorname{tr}(A_{3,1}) = 4 \) (respectively \( \operatorname{tr}(A_{3,-1}) = -4 \)), which contradicts the condition \( |r| \neq 1 \).

\medskip
Now want to distinguish  \( A_2, A_{3,i}\), and \(A_{3,-i} \).
\begin{itemize}
 \item  For \( A_2 \),
\[
c_2 = -r^2 - 1 - 1 - 1 - 1 - r^{-2} = (ri + r^{-1}i)^2 - 2 = \frac{c_3^2}{4} - 2
\]

 \item  For \( A_{3,i} \),
\[
c_2 = -1 - r - r^{-1} - r - r^{-1} - 1 = 2i(ri + r^{-1}i + 2i) + 2 = 2ic_3 + 2
\]

 \item  For \( A_{3,-i} \),
\[
c_2 = -1 + r + r^{-1} + r + r^{-1} - 1 = -2i(ri + r^{-1}i - 2i) + 2 = -2ic_3 + 2
\]

 Similarly, if \( A_{3,i} \) (respectively \( A_{3,-i} \)) satisfied \( c_2 = \frac{c_3^2}{4} - 2 \), then \( \operatorname{tr}(A_{3,i}) = 4i \) (respectively \( \operatorname{tr}(A_{3,-i}) = -4i \)), which is also not possible. 
\end{itemize} \end{enumerate}
This completes the proof. \qed
        
\section*{Declarations} 

	{\bf Ethical Approval}. not applicable.

{\bf Competing interests}. not applicable.

	{\bf Funding}. not applicable.

{\bf Authors' Contributions.} All authors have contributed equally. 

{\bf Availability of data and materials}. not applicable.

\end{document}